\tikzset{->-/.style={decoration={  markings,  mark=at position #1 with
    {\arrow{>}}},postaction={decorate}}}
\tikzset{-<-/.style={decoration={  markings,  mark=at position #1 with
    {\arrow{<}}},postaction={decorate}}}
\crefname{thm}{Theorem}{Theorems}
\crefname{asm}{Assumption}{Assumptions}
\crefname{cor}{Corollary}{Corollaries}
\crefname{dfn}{Definition}{Definitions}
\crefname{fct}{Fact}{Facts}
\crefname{lem}{Lemma}{Lemmas}
\crefname{mth}{Theorem}{Theorems}
\crefname{ntn}{Notation}{Notations}
\crefname{prp}{Proposition}{Propositions}
\crefname{rmk}{Remark}{Remarks}
\crefname{eg}{Example}{Examples}
\crefname{section}{\S\!}{\S\S\!}
\crefname{subsection}{\S\!}{\S\S\!}
\crefname{subsubsection}{\S\!}{\S\S\!}
\crefname{equation}{equation}{equations}
\theoremstyle{definition}
\newtheorem{thm}{Theorem}[section]
\newtheorem{dfn}[thm]{Definition}
\newtheorem{prp}[thm]{Proposition}
\newtheorem{rmk}[thm]{Remark}
\newtheorem{eg}[thm]{Example}
\newtheorem*{ntn*}{Notation}
\newtheorem*{rmk*}{Remark}
\newcommand{\ul}{\underline}
\newcommand{\xrr}[1]{\xrightarrow{\, #1 \, }}
\newcommand{\lto}{\longrightarrow}
\newcommand{\lmto}{\longmapsto}
\newcommand{\lsto}{\xrr{\sim}}
\newcommand{\bbA}{\mathbb{A}}
\newcommand{\bbC}{\mathbb{C}}
\newcommand{\bbH}{\mathbb{H}}
\newcommand{\calB}{\mathcal{B}}
\newcommand{\calE}{\mathcal{E}}
\newcommand{\calF}{\mathcal{F}}
\newcommand{\calI}{\mathcal{I}}
\newcommand{\calJ}{\mathcal{J}}
\newcommand{\calL}{\mathcal{L}}
\newcommand{\calM}{\mathcal{M}}
\newcommand{\calO}{\mathcal{O}}
\newcommand{\calP}{\mathcal{P}}
\DeclareMathOperator{\End}{End}
\DeclareMathOperator{\Ext}{Ext}
\DeclareMathOperator{\Hom}{Hom}
\DeclareMathOperator{\Ker}{Ker}
\DeclareMathOperator{\Hilb}{Hilb}
\title[Symplectic geometry of Higgs moduli and the Hilbert scheme of points]{Symplectic Geometry of Higgs Moduli and the Hilbert Scheme of Points over an Elliptic Curve}
\author{ZELIN JIA}
\date{\today}
\address{Graduate School of Mathematics, Nagoya University, Furocho, Chikusa-ku, Nagoya, 464-8602, Japan}
\email{zelin.jia.c0@math.nagoya-u.ac.jp}
\begin{document}
\maketitle

\begin{abstract}
  We show that the isomorphism first obtained by \cite{GNR} between the moduli space of Higgs bundles $\calM_H^s(n,0)$ with a certain structure over the elliptic curve $C$ and the Hilbert scheme of $n$ points $\Hilb^n(T^*C)$ is a symplectomorphism with respect to their natural symplectic structures.

  This note consists mainly of two parts: 
  The first part is to show that, by using the Fourier-Mukai transform, we can construct the isomorphism between $\calM_H^s(n,0)$ and $\Hilb^n(T^*C)$, which is a review of \cite{GNR,MG}. 
  The second part is to show that such an isomorphism is a symplectomorphism with respect to their natural symplectic structures.
\end{abstract}

\tableofcontents

\section{Introduction \& Preliminaries}

The study of Higgs bundles has many interesting aspects in the theory of integrable systems.
Firstly, in Hitchin's original paper \cite{Hi}, he constructed a proper morphism from the moduli space of Higgs bundles to a vector space which has similar structures to the Liouville completely integrable systems \cite{Hi2}.

On the other hand, Nakajima illustrated in his book \cite{HK} that the Hilbert scheme of $n$ points over the cotangent bundle of a Riemann surface $\text{Hilb}^n(T^*\Sigma)$ is analogous in many ways to the moduli space of Higgs bundles.
So it is natural to ask whether $\text{Hilb}^n(T^*\Sigma)$ is related with an integrable system, and how is the integrable system of $\text{Hilb}^n(T^*\Sigma)$ related to the Hitchin system of $\calM_H(n,d)$.

In the case of an elliptic curve $C$, we indeed have several fruitful results around the relationship between the Hilbert scheme of $n$ points and the moduli space of Higgs bundles.

In \cite{GNR}, using gauge theory and Fourier transform, Gorsky, Nekrasov and Rubtsov showed that the Hilbert scheme $\text{Hilb}^n(T^*C)$ is isomorphic to the moduli space of rank $n$ degree $0$ semistable Higgs bundles with marked structure $\calM_H^v(n,0)$.
\[
  \calM_H^v(n,0)\cong \text{Hilb}^n(T^*C).
\]
Later, Groechenig \cite{MG} observed that the marked structure is the parabolic structure on Higgs bundles and extended their result to certain five families of isomorphism between $\Gamma$-Hilbert scheme of points and moduli spaces of parabolic Higgs bundles. 

Then it is natural to ask whether the behavior of symplectic structures on $\calM_H^v(n,0)$ and $\text{Hilb}^n(T^*C)$ stays the same or not?

Note that the symplectic structure on the moduli space of parabolic Higgs bundles $\calM_H^s(n,0)$ is constructed by Biswas and Ramanan \cite[Section 6]{BR} using deformation theory.
On the other hand, there exists a symplectic structure on $\text{Hilb}^n(T^*C)$ induced naturally by the symplectic structure of $T^*C$.

In \cite[Section 4.3]{Hu}, Hurtubise showed that over a compact Riemann surface $\Sigma$ of genus $g$, denote the moduli space of stable Higgs bundles of rank $n$ degree $d$ as $\calM_H(n,d)$, the dimension of $\calM_H(n,d)$ is $2n^2(g-1)+2$.
We have a symplectomorphism between $U \subset \calM_H(n,d)$ and $V \subset \text{Hilb}^n(T^*\Sigma)$ with respect to the above two symplectic structures.
In \cite[Section 2.2]{GNR}, the authors gave a review of the above symplectic equivalence using an explicit realization of the separation of variables.

However, over an elliptic curve $C$, the above result degenerates to a rather trivial case, that is, the symplectomorphism between:
\[
  T^*\text{Pic}^0(C)\cong T^*C.
\]

In this note, we show that the isomorphism $\calM_H^s(n,0)\cong \mathrm{Hilb}^n(T^*C)$ is indeed a symplectomorphism over the whole spaces with respect to their natural symplectic structures
(one is given by deformation theory and the other is induced by the symplectic structure of $T^*C$).

\subsection{Notations \& Preliminaries}

Below is a list of some basic notation that we will need throughout this note.

\begin{itemize}[leftmargin=*]
  \item $C$ denotes a smooth projective algebraic curve of genus $1$ over the complex number field. 
  Choosing a point $o\in C$, we regard it as the origin of $C$.
  
  \item The Abel-Jacobi map gives an isomorphism between an elliptic curve and its dual variety,
  \[
    A_j\ :\ C\lto \text{Pic}^0(C)\ ;\ p\lmto \calO(p - o).
  \]
  We denote by $(\widehat{C}\coloneqq \text{Pic}^0(C),\widehat{o})$ the dual elliptic curve of $C$, where the distinguished point $\widehat{o}$ corresponds to the trivial line bundle $\calO$.
  We denote by $\widehat{p}$ the point of the dual elliptic curve corresponding to $\calO(p - o)$.

  \item By $\mathbf{R}$ we mean the right derived functor, and $\mathbf{R}^i$ its $i$-th cohomology. 
  Similarly, by $\mathbf{L}$ we mean the left derived functor, and $\mathbf{L}^i$ its $i$-th cohomology. 

  \item There exists a normalized Poincar\'{e} bundle $\calP$ on $C\times \widehat{C}$ such that we have $\calP|_{C\times \{\widehat{p}\}}=\calO_C(p-o)$ and $\calP|_{\{o\}\times \widehat{C}}$ is trivial. (See \cite[Section 2.5]{La} for details)

  \item Let $p_1$ and $p_2$ denote the projections of $C\times \widehat{C}$ to $C$ and $\widehat{C}$, a coherent sheaf $\calF$ on $C$ is called WIT-sheaf of index $i$ (WIT stands for weak index theorem), if 
  \[
    \mathbf{R}^jp_{2*}(\calP\otimes p_1^*\calF) = 0 \ \ \ \text{for all}\ j\neq i.
  \]
  Here the symbol $\otimes$ is an abbreviation of the symbol $\otimes_{\calO_{C\times \widehat{C}}}$. 
  (See \cite[Section 14.2]{La} for more details about the WIT-sheaf.)

  \item A Higgs bundle of rank $n$ and degree $d$ over $C$ is a pair $(E,\varphi)$, where $E$ is a vector bundle on $C$ and $\varphi$ is a map of vector bundles
  \[
    \varphi: E\lto E\otimes K_C
  \]
  called the Higgs field. 
  In our case, since the canonical bundle over an elliptic curve is trivial, the Higgs field is the endomorphism of $E$, i.e. $\varphi\in H^0(C,\End(E))$.

  \item Two Higgs bundles $(E,\varphi)$ and $(F,\phi)$ are said to be isomorphic if there exists an isomorphism $f: E\to F$ of vector bundles such that the following diagram commutes.
  \[
    \xymatrix{
      E \ar[r]^ - {\varphi} \ar[d]^ - {f} & E\otimes K_C \ar[d]^ -{f\otimes \text{id}} \\
      F \ar[r]^ -{\phi} & F\otimes K_C
    }
  \]
\end{itemize}

Here we introduce some basic definitions and facts around Higgs bundles.

\begin{dfn}
  Given a vector bundle $E$ on $C$, the slope of $E$ is defined by 
  \[
    \mu(E) =\deg (E) \slash \text{rank}(E).
  \]
  where $\text{rank}(E)$ is the rank of $E$, and $\deg (E)$ is the degree of $E$.
\end{dfn}

\begin{dfn}
  Given a Higgs bundle $(E,\varphi)$, we say that a subbundle $F\subset E$ is $\varphi$-invariant if $\varphi(F)\subseteq F\otimes K_C$. 
  We can view $(F,\varphi|_F)$ as a Higgs subbundle of $(E,\varphi)$.
  A Higgs bundle $(E,\varphi)$ is semistable if the slope of any $\varphi$-invariant subbundle $F$ satisfies 
  \[
    \mu(F)\leq \mu(E).
  \]
  A Higgs bundle is stable if the above inequality is strict for every proper Higgs subbundle.

  A Higgs bundle is polystable if it is semistable and isomorphic to a direct sum of stable Higgs bundles of the same slope.
\end{dfn}

If $(E,\varphi)$ is a semistable Higgs bundle of slope $\mu$, then a general fact tells us that it has a Jordan-H\"{o}lder filtration of Higgs subbundles (See \cite[Proposition 4.1]{Ni})
\[
  0 = E_0\subsetneq E_1 \subsetneq E_2 \subsetneq \cdots \subsetneq E_r = E
\]
where the Higgs subbundles $(E_i\slash E_{i-1},\varphi|_{E_i\slash E_{i-1}})$ have the same slope $\mu$.

Using Jordan-H\"{o}lder filtration, we can define the graded object associated to every semistable Higgs bundle $(E,\varphi)$, that is 
\[
  \text{Gr}(E,\varphi)\coloneqq \oplus_i (E_i\slash E_{i-1},\varphi|_{E_i\slash E_{i-1}})
\]
one can show that the graded object $\text{Gr}(E,\varphi)$ associated to $(E,\varphi)$ is well defined up to isomorphism.
We say that two semistable Higgs bundles $(E,\varphi)$ and $(F,\phi)$ are $S$-equivalent if $\text{Gr}(E,\varphi)\cong \text{Gr}(F,\phi)$.

Indeed, there is a unique polystable Higgs bundle representing the $S$-equivalent class of every semistable Higgs bundle.

\begin{thm}\cite{Hi,Ni,Sim}
  The moduli space $\calM_H^{ss}(n,d)$ whose closed points parametrize semistable Higgs bundles is a quasi-projective algebraic variety.

  The moduli space $\calM_H(n,d)$ whose closed points parametrize stable Higgs bundles is open and dense in $\calM_H^{ss}(n,d)$, moreover, $\calM_H(n,d)$ is a smooth projective algebraic variety.
\end{thm}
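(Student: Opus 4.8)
The plan is to reconstruct the theorem by Geometric Invariant Theory, following Nitsure's treatment \cite{Ni} of Higgs pairs (which refines Simpson's \cite{Sim} construction of moduli of sheaves by carrying the Higgs field as extra data). The first step is \emph{boundedness}: the family of semistable Higgs bundles $(E,\varphi)$ of fixed rank $n$ and degree $d$ is bounded, because semistability of $(E,\varphi)$ bounds the Harder--Narasimhan type of the underlying $E$, so for a uniformly large twist $m$ the sheaf $E(m)$ is globally generated with vanishing $H^1$. This realizes each such $E$ as a quotient $\calO_C(-m)^{\oplus N} \srj E$ with $N = \chi(E(m))$, landing it in a fixed Quot scheme. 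I would then enlarge this to a parameter scheme $R$, a locally closed subscheme of the relative Quot scheme cut out by the condition that a chosen section $\varphi \in H^0(C,\End E \otimes K_C)$ be compatible with the quotient structure, so that $R$ parametrizes framed Higgs bundles and the change-of-framing group $\GL_N$ acts with orbits equal to isomorphism classes.

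The heart of the argument, and the step I expect to be the main obstacle, is the \emph{GIT quotient together with the matching of stabilities}. After choosing a $\GL_N$-linearized ample line bundle on a suitable compactification of $R$, one applies the Hilbert--Mumford numerical criterion. The delicate point is to prove that a point of $R$ is GIT-(semi)stable exactly when the corresponding Higgs bundle is slope-(semi)stable: this requires translating each one-parameter subgroup of $\GL_N$ into a weighted filtration of $E$ by $\varphi$-invariant subsheaves and then comparing the resulting Mumford weight with the slope inequality $\mu(F) \le \mu(E)$. Granting this dictionary, the quotient $\calM_H^{ss}(n,d) \ceq R /\!\!/ \GL_N$ is a quasi-projective variety whose closed points are the $S$-equivalence classes of semistable Higgs bundles.

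For the stable locus I would argue as follows. Stability is an open condition in flat families, so $\calM_H(n,d)$ is an open subscheme; the GIT-stable points have closed orbits with stabilizer exactly the scalars $\bbC^\times$, making the quotient a geometric (orbit-space) quotient there. Density follows from a dimension estimate showing that the strictly semistable locus, built from lower-rank strata via the Jordan--H\"older filtration, is nowhere dense. Smoothness is the deformation-theoretic step: the Zariski tangent space at $(E,\varphi)$ is $\bbH^1$ of the deformation complex $\End E \xrr{[\varphi,\cdot]} \End E \otimes K_C$, while the obstructions lie in the trace-free part of $\bbH^2$ of this complex. By Serre duality this obstruction space is dual to the trace-free part of $\bbH^0$, which vanishes for a stable Higgs bundle; hence the deformations are unobstructed and $\calM_H(n,d)$ is smooth, of dimension computed by Riemann--Roch for the complex.

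The remaining assertion of projectivity is the one I would scrutinize most carefully, since the GIT construction yields only quasi-projectivity and the Higgs moduli is noncompact in general. Here I would rely on the specific geometry in force (genus one, with $K_C \cong \calO_C$) and on properness of the map to the Hitchin-type base, together with the normalizations under which the cited statement is made, to control the behavior at the boundary; pinning down exactly which completion makes the space proper is the subtle issue, and is where the proof needs the most care.
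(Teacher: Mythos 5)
The paper supplies no proof of this statement at all: it is quoted as a known theorem with citations to Hitchin, Nitsure and Simpson, so the only meaningful comparison is with that literature. Measured against it, your sketch is the standard and correct route: boundedness of the semistable family, the Quot-scheme parameter space with the Higgs field as extra data, the Hilbert--Mumford matching of GIT (semi)stability with slope (semi)stability of $\varphi$-invariant subsheaves, the identification of closed points with $S$-equivalence classes, and smoothness of the stable locus via vanishing of the trace-free $\bbH^2$ of the complex $\End E \to \End E \otimes K_C$ (dual, by Serre duality, to the trace-free $\bbH^0$, which vanishes since stable Higgs bundles are simple). This is essentially Nitsure's argument, and for those assertions your proposal is sound.

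Where you hesitated, however, you should have concluded rather than hedged: the projectivity claim is simply false as stated, and no ``completion'' or normalization rescues it. The moduli space $\calM_H(n,d)$ is only quasi-projective: the $\bbC^\times$-action scaling the Higgs field has non-closed orbits, the Hitchin map is proper but its target is an affine space, and the paper's own citation of \cite{FGN} two lines later, $\calM_H^{ss}(n,0)\cong \mathrm{Sym}^n(T^*C)$, exhibits a manifestly non-compact example. The word ``projective'' in the statement is an error for ``quasi-projective'', so the final step you flagged as ``where the proof needs the most care'' is not a hard step but an unprovable one. A second, smaller caveat: your density argument (dimension count on the strictly semistable strata) presupposes that stable Higgs bundles exist, which holds for genus $\geq 2$ or $\gcd(n,d)=1$ but fails in exactly the paper's situation of interest --- by the quoted theorem of \cite{FGN} there are no stable Higgs bundles of rank $n>1$ and degree $0$ on an elliptic curve, so there ``open and dense'' is vacuous and $\calM_H(n,0)$ is empty. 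You should state these hypotheses explicitly rather than inherit them silently from the citations.
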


In our case over an elliptic curve, we have the following theorem.

\begin{thm}\cite[Theorem 4.19]{FGN}
  A Higgs bundle $(E,\varphi)$ over $C$ is semistable if and only if $E$ is semistable.
  In particular, there are no stable Higgs bundles over $C$ of rank $n>1$, degree $d=0$, and for the moduli space of semistable Higgs bundles over $C$, we have 
  \[
    \calM_H^{ss}(n,d)\cong \text{Sym}^n(T^*C)
  \]
  where the degree $d=0$.
\end{thm}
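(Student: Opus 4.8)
The plan is to exploit the triviality of the canonical bundle. Since $K_C \cong \calO_C$, the coefficients of the characteristic polynomial of $\varphi$ lie in $H^0(C, K_C^{\otimes i}) \cong H^0(C, \calO_C) = \bbC$, hence are constants, so the eigenvalues of $\varphi$ are fixed complex numbers $\lambda_1, \dots, \lambda_k$. First I would form the generalized eigenbundle decomposition $E = \bigoplus_i E_{\lambda_i}$ with $E_{\lambda_i} = \ker(\varphi - \lambda_i\,\id)^n$; because the algebraic multiplicities are constant, the morphisms $(\varphi - \lambda_i\,\id)^n$ have constant rank, so their kernels are genuine $\varphi$-invariant subbundles and the sum is direct. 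This reduces every question about $(E,\varphi)$ to the single-eigenvalue case, and after subtracting the scalar $\lambda_i\,\id$ (a translation in the fibre direction of $T^*C$) to the nilpotent case $\varphi^n = 0$.

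Next I would prove the auxiliary statement that a Higgs-semistable $(E,\varphi)$ of degree $0$ has semistable underlying bundle $E$. If $E$ were unstable, its maximal destabilizing subbundle $E'$ is canonical and semistable of slope $\mu(E') > 0$; any nonzero map $E' \to (E/E')\otimes K_C \cong E/E'$ would contradict that all Harder--Narasimhan slopes of $E/E'$ are $< \mu(E')$, so $\varphi(E') \subseteq E' \otimes K_C$ and $E'$ would destabilize the Higgs bundle, a contradiction. Moreover, each $E_{\lambda_i}$ is simultaneously a sub- and a quotient-bundle of the slope-$0$ semistable bundle $E$, so it has slope exactly $0$.

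For the nonexistence of stable objects when $n > 1$, I would argue case by case. If $\varphi$ has at least two eigenvalues, some $E_{\lambda_i}$ is a proper $\varphi$-invariant subbundle of slope $0 = \mu(E)$, contradicting stability. If $\varphi$ has a single eigenvalue, reduce to $\varphi$ nilpotent: when $\varphi = 0$, stability of $(E,0)$ means $E$ is a stable vector bundle of rank $n>1$ and degree $0$, which do not exist on an elliptic curve since stability forces $\gcd(\rk, \deg) = 1$; when $\varphi \neq 0$, the subsheaf $\ker\varphi$ is a proper nonzero subbundle (a kernel mapping into a locally free sheaf is automatically saturated), and comparing it with $\image\varphi \subseteq E \otimes K_C \cong E$ via $\deg(\image\varphi) = -\deg(\ker\varphi)$ together with semistability of $E$ forces $\deg \ker\varphi = 0$; once again a slope-$0$ invariant subbundle contradicts stability.

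For the isomorphism I would identify the closed points of $\calM_H^{ss}(n,0)$ with isomorphism classes of polystable Higgs bundles, using the unique polystable representative of each $S$-equivalence class. Since the only slope-$0$ stable Higgs bundles are the rank-one pairs $(L, \lambda)$ with $L \in \text{Pic}^0(C)$ and $\lambda \in \bbC$ (the Higgs field being a scalar because $\End(L) = \calO_C$), every polystable object is $\bigoplus_{i=1}^n (L_i, \lambda_i)$, determined up to isomorphism by the unordered multiset $\{(L_i, \lambda_i)\}$. Via the Abel--Jacobi identification $\text{Pic}^0(C) \cong C$ and the trivialization $T^*C \cong C \times \bbA^1$ coming from $K_C \cong \calO_C$, this yields a bijection with $\text{Sym}^n(T^*C)$. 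The main obstacle is upgrading this pointwise bijection to an isomorphism of varieties: I would construct the natural morphism $\text{Sym}^n(T^*C) \to \calM_H^{ss}(n,0)$ from the family $\bigoplus (L_i, \lambda_i)$ via the GIT/universal description of the moduli space, and then invoke normality of $\text{Sym}^n(T^*C)$ together with bijectivity (Zariski's main theorem) to conclude it is an isomorphism. Controlling the eigenbundle decomposition in families and building the inverse morphism is where the genuine technical care lies.
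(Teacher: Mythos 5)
A preliminary remark: the paper itself offers no proof of this statement --- it is imported verbatim, citation and all, from \cite[Theorem 4.19]{FGN} --- so your attempt can only be measured against the standard argument of the cited source, and your classification half is exactly that argument, carried out correctly. The constancy of the characteristic polynomial because $H^0(C,K_C^{\otimes i})\cong\bbC$, the constant-rank observation making the generalized eigensheaves $\ker(\varphi-\lambda_i\,\id)^n$ genuine $\varphi$-invariant subbundles, the deduction that a degree-zero Higgs-semistable bundle has semistable underlying bundle (via the maximal destabilizing subbundle and the vanishing of maps from a semistable bundle to one of strictly smaller maximal slope), the degree bookkeeping $\deg(\image\varphi)=-\deg(\ker\varphi)$ forcing a slope-zero invariant subbundle in the nonzero nilpotent case, and Atiyah's coprimality criterion when $\varphi=0$ are all sound; together they correctly give both the nonexistence of stable objects for $n>1$, $d=0$, and the identification of closed points (S-equivalence classes, represented by polystable $\bigoplus_i(L_i,\lambda_i)$) with unordered $n$-tuples in $T^*C\cong C\times\bbA^1$.

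The genuine gap is your concluding step. Zariski's main theorem upgrades a bijective morphism of irreducible complex varieties (bijective implies birational in characteristic zero) to an isomorphism only when the \emph{target} is normal; normality of the source buys nothing, as the normalization $\bbA^1\lto\{y^2=x^3\}$ --- bijective, smooth source, not an isomorphism --- shows. Your morphism runs $\mathrm{Sym}^n(T^*C)\lto\calM_H^{ss}(n,0)$, so the normality you must supply is that of the Higgs moduli space, a nontrivial input from its GIT construction (Nitsure/Simpson), not normality of $\mathrm{Sym}^n(T^*C)$ as you wrote. Two repairs are available: quote normality of $\calM_H^{ss}(n,0)$ from the GIT construction and apply ZMT as intended; or build the inverse morphism $\calM_H^{ss}(n,0)\lto\mathrm{Sym}^n(T^*C)$ directly --- the Hitchin map already provides the $\mathrm{Sym}^n(\bbA^1)\cong\bbC^n$ coordinates, and the $\mathrm{Sym}^n(C)$ part requires the spectral/Fourier--Mukai data in families --- after which the normality of $\mathrm{Sym}^n(T^*C)$ that you invoke does sit on the correct side. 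As written, the final sentence is not merely unfinished (as you yourself flag) but aims the key hypothesis at the wrong space.
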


In the main text, we will study the Higgs bundles equipped with extra structure.

\begin{dfn}
  Let $\calM_H^v(n,d)$ denote the moduli space of degree $d$ rank $n$ marked Higgs bundles on $C$, whose closed points parametrize triples $(E,\varphi,v)$ where $(E,\varphi)$ is a semistable Higgs bundle and $v\in E_o$ such that there are no proper $\varphi$-invariant subbundle $F\subset E$ with $\mu(F)\geq \mu(E)$ and $v\in F_o$.
\end{dfn}

\begin{dfn}
  Let $\calM_H^s(n,d)$ denote the moduli space of degree $d$ rank $n$ stable parabolic Higgs bundles $(E_*,\varphi)$ over the elliptic curve $C$,
  where $E_*\coloneqq (E,L_o)$ with $L_o$ a line of the fiber $E_o$ over the origin $o\in C$.
  The Higgs field $\varphi$ can have at worst a first order pole at $o$, and the residue of the Higgs field at $o$ is nilpotent with respect to its multi-dimensional $\{n,1,0\}$ flag.

  In other words, the Higgs field $\varphi$ is the bundle morphism of the form $\varphi: E\lto E\otimes K_C\otimes \calO_C(o)$, and the image of the homomorphism 
  \[
    \varphi_o : E_o \lto (E\otimes \calO_C(o))_o
  \]
  is contained in the subspace $L_o\otimes (\calO_C(o))_o$ and furthermore, we have $\varphi_o(L_o)=0$.

  The stability is given as for every $\varphi$-invariant subbundle $F\subset E$, we have $\mu(F)< \mu(E)$. 
\end{dfn}

Indeed, \cite[Section 2.5.]{MG} obtained a natural equivalence between the parabolic structures and the marked structures. 
Consequently, one can regard $\calM_H^s(n,d)$ and $\calM_H^v(n,d)$ as the same moduli spaces.

\section{Marked Higgs bundles and Hilbert scheme of $n$ points}

In this section, we give a review of the isomorphism $\calM_H^s(n,0)\cong \text{Hilb}^n(T^*C)$.

\subsection{The BNR correspondence}

Consider any rank $n$ Higgs bundle $(E,\varphi)$ over $C$, we can associate to it a polynomial called the characteristic polynomial.
\[
  P(\lambda) =\sum_{i = 0}^{n}s_i\lambda^{n-i},\ \ (s_i=(-1)^i\text{tr}(\wedge^i\varphi),\lambda\in K_C)
\]

Denote the total space of $K_C$ as $|K_C|$, and $p\ :\ |K_C|\lto C$ the canonical projection.
Consider the pullback of $K_C$ onto its total space, together with its tautological section $\lambda \in H^0(|K_C|,p^*K_C)$, i.e. $\lambda(v_x)=v_x\in (p^*K_C)_x,\ \forall x\in C$.

Then the pullback bundle $p^*K_C$ is such that its fiber over $v\in |K_C|$ is the fiber over $p(v)$, i.e., $(p^*K_C)_v=(K_C)_{p(v)}$.

\begin{dfn}
 Denote the Hitchin base $\calB = \bigoplus_{i = 1}^{n}H^0(C,(K_C)^i)$, set $ s = (s_1, s_2, \dots s_n) \in \calB$.
 Then the spectral curve $X_s\subset T^*C$ is given by the zero locus in $|K_C|$ of the section $P_s(\lambda)$ :
  \[
    P_s(\lambda) \coloneqq \lambda^n + (p^*s_1)\lambda^{n - 1} +(p^*s_2)\lambda^{n - 2} +\cdots + p^*s_n \in H^0(|K_C|,p^*(K_C)^n).
  \]
  We denote the natural projection map from the spectral curve $X_s$ to $C$ as $\pi\ :\ X_s\lto C$.
\end{dfn}

\begin{prp}[\cite{BNR}]
  Define the morphism $u$ of sheaves as
  \[
    u\ :\ K_C^{- n}\lto \calO_C\oplus K_C^{ - 1}\oplus \cdots \oplus K_C^{ - n} ;\ \alpha \lmto \alpha(s_n + s_{n - 1} +\cdots + 1).
  \]
  If we denote $\calJ_s$ as the ideal sheaf of $\text{Sym}(K_C^{-1})$ generated by the image $\text{Im}(u)$, then the scheme structure of the spectral curve $X_s$ is given as $X_s= \text{\ul{Spec}}(\text{Sym}(K_C^{-1})/\calJ_s)$.
\end{prp}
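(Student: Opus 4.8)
The plan is to compute the zero scheme of the section $P_s(\lambda)$ directly through the relative $\text{\ul{Spec}}$ construction and match it with $\text{Sym}(K_C^{-1})/\calJ_s$. Since $p : |K_C| \to C$ is affine, it suffices to work with the pushforward sheaves of algebras on $C$, where $p_*\calO_{|K_C|} = \text{Sym}(K_C^{-1}) = \bigoplus_{m\geq 0} K_C^{-m}$, the grading recording the fiberwise polynomial degree. First I would recall the standard description of the zero locus of a section of a line bundle: for $\sigma \in H^0(Y,\calL)$, the subscheme $V(\sigma)$ is cut out by the ideal sheaf that is the image of the contraction $\calL^{-1}\xrightarrow{\sigma}\calO_Y$. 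Applying this to $Y = |K_C|$, $\calL = p^*K_C^n$ and $\sigma = P_s(\lambda)$, the spectral curve $X_s$ is defined by the ideal sheaf $\text{Im}\bigl(p^*K_C^{-n}\xrightarrow{P_s(\lambda)}\calO_{|K_C|}\bigr)$.

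Next I would push this forward along the affine map $p$ and identify the resulting map of $\text{Sym}(K_C^{-1})$-modules. By the projection formula, $p_*(p^*K_C^{-n}) = K_C^{-n}\otimes\text{Sym}(K_C^{-1})$, which is the free rank-one $\text{Sym}(K_C^{-1})$-module on its degree-zero summand $K_C^{-n}$. The key computation is to unwind the tautological section: in a local frame $dz$ of $K_C$, the fiber coordinate $\eta$ on $|K_C|$ coincides with the degree-one generator $\partial = (dz)^{-1}$ of $\text{Sym}(K_C^{-1})$, so that $\lambda = \eta\,dz$ and hence $P_s(\lambda) = (\eta^n + f_1\eta^{n-1} + \cdots + f_n)(dz)^n$, where $s_i = f_i\,(dz)^i$ locally. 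Multiplying a degree-zero element $\alpha\,(dz)^{-n}\in K_C^{-n}$ by $P_s(\lambda)$ then yields $\alpha\,(\partial^n + f_1\partial^{n-1} + \cdots + f_n)\in\text{Sym}(K_C^{-1})$, which is precisely $u(\alpha\,(dz)^{-n})$. This identifies the restriction of the multiplication map to the degree-zero summand with $u$.

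It then remains to observe that the image of the full multiplication map equals the ideal generated by $\text{Im}(u)$. Since $K_C^{-n}\otimes\text{Sym}(K_C^{-1})$ is generated as a $\text{Sym}(K_C^{-1})$-module by its degree-zero part, the image of the $\text{Sym}(K_C^{-1})$-linear map given by multiplication by $P_s(\lambda)$ is the ideal generated by the image of that degree-zero part, namely $\langle\text{Im}(u)\rangle = \calJ_s$. Combining this with the previous paragraph, the ideal sheaf of $X_s$ pushes forward to $\calJ_s$; affineness of $p$ then gives $X_s = \text{\ul{Spec}}(p_*\calO_{X_s}) = \text{\ul{Spec}}(\text{Sym}(K_C^{-1})/\calJ_s)$, as claimed.

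The main obstacle I anticipate is bookkeeping rather than conceptual: one must carefully track the two distinct gradings in play, namely the fiberwise polynomial degree in $\text{Sym}(K_C^{-1})$ and the $K_C$-twists carried by the individual $s_i$, in order to verify that the formal expression $s_n + s_{n-1} + \cdots + 1$ defining $u$ genuinely lands summand by summand in $\calO_C\oplus K_C^{-1}\oplus\cdots\oplus K_C^{-n}$ and reassembles to $P_s(\lambda)$. The local trivialization makes this transparent, but the one point requiring real care is checking that the identification of $\lambda$ with the canonical degree-one generator is independent of the frame $dz$, i.e. that it is the intrinsic section classified by the identity of $K_C^{-1}$; this is what makes the local computation glue to the stated global statement.
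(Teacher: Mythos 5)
Your argument is correct, and at its core it performs the same computation as the paper's proof --- choose a local frame of $K_C$, identify the fiber coordinate $\eta$ with the degree-one generator of $\text{Sym}(K_C^{-1})$, and match the coefficients $a_i=\eta^i s_i$ of $P_s(\lambda)$ against $u$ --- but the surrounding scaffolding is genuinely different. The paper works purely locally: on a trivializing open set $U$ it computes $u(\eta^n)=a_n+a_{n-1}\eta+\cdots+\eta^n$, notes that $\text{\ul{Spec}}(\text{Sym}(K_C^{-1})/\calJ_s)|_U=\text{Spec}\bigl(\calO_C(U)[\eta]/\langle a_n+a_{n-1}\eta+\cdots+\eta^n\rangle\bigr)$, and recognizes the right-hand side as the coordinate ring of $X_s$ over $U$, leaving the gluing of these identifications implicit. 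You instead argue globally first: the zero scheme of $P_s(\lambda)\in H^0(|K_C|,p^*K_C^n)$ has ideal sheaf $\text{Im}\bigl(p^*K_C^{-n}\to\calO_{|K_C|}\bigr)$, you push this forward along the affine map $p$ via the projection formula, and you reduce to two clean statements: (i) the restriction of the pushed-forward multiplication map to the degree-zero summand $K_C^{-n}$ equals $u$ --- your local computation, which is automatically frame-independent because both maps are globally defined (indeed, under $p_*p^*K_C\cong K_C\otimes\text{Sym}(K_C^{-1})$ the tautological section $\lambda$ is the canonical element of $K_C\otimes K_C^{-1}$, so your closing worry resolves itself); and (ii) since $K_C^{-n}\otimes\text{Sym}(K_C^{-1})$ is generated as a $\text{Sym}(K_C^{-1})$-module by its degree-zero part, the full image is precisely the ideal generated by $\text{Im}(u)$, i.e.\ $\calJ_s$. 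What your route buys is an explicit account of why the local checks glue and of where the ``ideal generated by $\text{Im}(u)$'' description comes from, at the cost of invoking the zero-scheme formalism, the projection formula, and exactness of $p_*$ for affine $p$; the paper's route buys brevity and concreteness, but silently uses exactly your step (ii) when it writes the local ideal as $\langle a_n+a_{n-1}\eta+\cdots+\eta^n\rangle$.
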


From the above proposition, one can conclude that the natural projection $\pi\ :\ X_s\lto C$ is finite, hence affine.
Since an affine morphism induces an equivalence of groupoids of quasi-coherent modules, we have the following theorem.

\begin{thm}[\cite{BNR}]\label{BNR}
  Let $s\in \calB$, and $X_s$ is the corresponding spectral curve, and we denote the characteristic polynomial which defines the spectral curve $X_s$ as $P_s$.
  Then there is a natural equivalence of groupoids between
  \begin{itemize}
    \item Quasi-coherent sheaf $\calL$ on $X_s$ with $\chi(\calL)=d+n(g-1)$ and $\pi_{*}\calL$ is locally free of rank $n$.
    \item Higgs bundle $(E,\varphi)$ on $C$ of rank $n$ degree $d$ such that the characteristic polynomial of $\varphi$ equals $P_s$.
  \end{itemize}
\end{thm}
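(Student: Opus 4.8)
The plan is to exhibit two mutually quasi-inverse functors and to check that each is natural, i.e. sends isomorphisms to isomorphisms. The whole argument rests on the affineness of $\pi\ :\ X_s\lto C$ established just above the statement, together with the explicit identification $\pi_*\calO_{X_s}\cong \text{Sym}(K_C^{-1})/\calJ_s$ recorded in \eqref{eq_1}. Since $\pi$ is affine, the functor $\pi_*$ is an equivalence between quasicoherent $\calO_{X_s}$-modules and quasicoherent $\pi_*\calO_{X_s}$-modules on $C$. Thus the theorem reduces to the algebraic assertion that, on a rank-$n$ vector bundle $E$ over $C$, a $\pi_*\calO_{X_s}$-module structure is exactly the same data as a Higgs field $\varphi$ whose characteristic polynomial is $P_s$.

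First I would treat the forward direction. Given $\calL$ on $X_s$ with $E\coloneqq\pi_*\calL$ locally free of rank $n$, multiplication by the tautological section $\lambda\in H^0(X_s,\pi^*K_C)$ is an $\calO_{X_s}$-linear map $\calL\lto \calL\otimes \pi^*K_C$. Pushing forward and applying the projection formula $\pi_*(\calL\otimes\pi^*K_C)\cong E\otimes K_C$ produces an $\calO_C$-linear homomorphism $\varphi\ :\ E\lto E\otimes K_C$, i.e. a Higgs field. Because $\lambda$ satisfies $P_s(\lambda)=0$ on $X_s$ by the very definition of $\calJ_s$, the induced $\varphi$ satisfies $P_s(\varphi)=0$, and one verifies that $P_s$ is in fact the characteristic polynomial of $\varphi$ by matching degrees and leading terms rank by rank.

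Conversely, a Higgs field $\varphi\ :\ E\lto E\otimes K_C$ is precisely an action of $\text{Sym}(K_C^{-1})$ on $E$ in which the generator $\lambda$ acts by $\varphi$; the hypothesis that the characteristic polynomial of $\varphi$ equals $P_s$ forces this action to annihilate $\calJ_s$, so it descends to a $\pi_*\calO_{X_s}$-module structure, hence to a quasicoherent sheaf $\calL$ on $X_s$ with $\pi_*\calL\cong E$. The numerical matching is bookkeeping: affineness of $\pi$ gives $\chi(\calL)=\chi(E)$, and Riemann--Roch on the genus-one curve $C$ reads $\chi(E)=\deg E=d$, which is exactly the normalization $\chi(\calL)=d+n(g-1)$ in the hypothesis. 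Since both constructions are built from the functorial operations $\pi_*$, the $\lambda$-action, and restriction of scalars, they carry isomorphisms to isomorphisms and are visibly inverse to one another, yielding the claimed equivalence of groupoids.

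I expect the genuine obstacle to be the sheaf-theoretic Cayley--Hamilton step underpinning both directions: I must show that ``$P_s(\varphi)=0$'' and ``$P_s$ is the characteristic polynomial of $\varphi$'' are \emph{equivalent} statements about a $K_C$-twisted endomorphism of a bundle over $C$, not merely about a matrix over a field. The cleanest route is to check the identity on the dense open locus of $C$ over which $\pi$ is \'etale, where $E$ splits into $\varphi$-eigenlines and everything reduces to the classical scalar case, and then to extend over the whole of $C$ using that a map of coherent sheaves vanishing on a dense open of a reduced base vanishes. Extra care is needed where $X_s$ degenerates (becomes singular or non-reduced); there one argues directly with the local presentation $\calO_C(U)[\lambda]/\langle a_n+a_{n-1}\lambda+\cdots+\lambda^n\rangle$ from the proof above, which makes the module structure and its compatibility with $\calJ_s$ manifest.
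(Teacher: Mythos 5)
Your skeleton is the standard one — affineness of $\pi$, the equivalence between quasicoherent sheaves on $X_s$ and quasicoherent $\pi_*\calO_{X_s}$-modules, the Higgs field obtained by pushing forward multiplication by the tautological section via the projection formula, and the Euler-characteristic bookkeeping at $g=1$ — and since the paper states this theorem as a citation to [BNR89] without proof, there is no in-paper argument to compare against. But your proposal has a genuine gap, precisely at the step you yourself flag as the crux. You claim that a $\pi_*\calO_{X_s}$-module structure on a rank-$n$ bundle $E$ is ``exactly the same data as a Higgs field $\varphi$ whose characteristic polynomial is $P_s$,'' i.e.\ that $P_s(\varphi)=0$ is \emph{equivalent} to $P_\varphi = P_s$. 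This is false once $X_s$ fails to be integral, and your proposed fix (split $E$ into $\varphi$-eigenlines over the \'etale locus, then extend by density) silently assumes that $\calL$ has generic rank one on \emph{every} irreducible component of $X_s$, which does not follow from ``$\pi_*\calL$ locally free of rank $n$.'' Concretely: take $n=2$ and $P_s(\lambda)=\lambda(\lambda-a)$ with $0\neq a\in H^0(C,K_C)\cong\bbC$, so that $X_s = C_0\sqcup C_a$ is two disjoint horizontal copies of $C$ and $\pi$ is everywhere \'etale. Let $\calL$ be any rank-$2$ bundle of degree $d$ on the component $C_0$, extended by zero on $C_a$. Then $\pi_*\calL$ is locally free of rank $2$ and $\chi(\calL)=d$, so $\calL$ satisfies every hypothesis of the first groupoid; yet the induced Higgs field is $\varphi=0$, whose characteristic polynomial is $\lambda^2\neq P_s$. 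Over your ``\'etale locus'' (here all of $C$) the bundle does not decompose into eigenlines for the two roots of $P_s$, because $\calL$ is simply not supported on all of $X_s$. So your forward functor does not land in the stated target: module structures over $\calO_{X_s}$ correspond to the strictly weaker condition $P_s(\varphi)=0$.

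The one-sided part of your argument is fine — Cayley--Hamilton for $K_C$-twisted endomorphisms is a local polynomial identity, so $P_\varphi=P_s$ does imply $P_s(\varphi)=0$ and the backward functor is well-defined. What is missing is the extra constraint on the sheaf side that makes the correspondence an equivalence as stated: one must either assume $X_s$ integral (then $\pi_*\calL$ torsion-free of rank $n$ forces $\calL$ to be torsion-free of generic rank one, and at the generic point the minimal polynomial of $\lambda$ is the irreducible $P_s$ of degree $n$, hence equals the characteristic polynomial), or impose that the support cycle of $\calL$ — equivalently the Fitting support of the $\lambda$-action — equals $X_s$ with its multiplicities. This is not a pedantic distinction for the present paper: the application in the final section works precisely over totally reducible spectral curves $X_s=\bigsqcup_{i=1}^n C_i$, where one needs to know that the relevant sheaves restrict to line bundles on every component, i.e.\ exactly the condition your reduction loses.
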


\subsection{Higgs moduli as Hilbert scheme of $n$ points}

Now we use the BNR correspondence and the relative Fourier-Mukai transform to prove the following theorem.

\begin{thm}\cite[Theorem 4.1]{MG}\label{thm1}
  Recall that the moduli space $\calM_H^s(n,0)$ parametrizes triples $(E,\varphi,v)$ where $(E,\varphi)$ is a degree $0$ rank $n$ semistable Higgs bundle and $v\in E_o$ such that there are no proper $\varphi$-invariant subbundle $F\subset E$ with $\mu(F)\geq \mu(E)$ and $v\in F_o$.

  There is an isomorphism between $\calM_H^s(n,0)$ and the Hilbert scheme of $n$ points over $T^*C$.
  \[
  \calM_H^s(n,0)\cong \text{Hilb}^n(T^*C).
  \]
\end{thm}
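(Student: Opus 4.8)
The plan is to realize the isomorphism as the composite of the BNR spectral correspondence (\cref{BNR}) with a relative Fourier--Mukai transform along the elliptic fibration, using in an essential way that $K_C\cong\calO_C$. Because the canonical bundle is trivial, the total space $T^*C=|K_C|$ is isomorphic to $C\times\bbA^1$, and every $s\in\calB=\bigoplus_{i=1}^n H^0(C,K_C^i)\cong\bbC^n$ has constant coefficients; hence the spectral curve $X_s$ is the union of $n$ horizontal copies $C\times\{\lambda_j\}$ lying over the roots $\lambda_1,\dots,\lambda_n$ of the characteristic polynomial, which is exactly what will make the transform land in a Hilbert scheme of \emph{points}. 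As a first step I would translate a triple $(E,\varphi,v)$ into sheaf data on $T^*C$: by \cref{BNR} the Higgs bundle $(E,\varphi)$ corresponds to a torsion-free rank-one sheaf $\calL$ on $X_s$, and pushing forward along $j\colon X_s\inj T^*C$ I view it as a coherent sheaf $\calE\ceq j_*\calL$ on $T^*C=C\times\bbA^1$, finite of degree $n$ over $C$, with $p_{1*}\calE=E$ and with $\lambda$ acting through $\varphi$.

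Next I would apply the Fourier--Mukai functor $\mathbf{R}p_{2*}(\calP\otimes p_1^*(-))$ relatively over the $\bbA^1$-direction, transforming along $C$ while leaving $\lambda$ untouched, to obtain a complex on $\widehat{C}\times\bbA^1=T^*\widehat{C}$. The crucial input is the WIT property on the elliptic curve: a degree $0$, rank $n$, semistable bundle is WIT of index $1$, and its transform is a torsion sheaf of length $n$ on $\widehat{C}$. Applied in the relative setting, this should show that the transform of $\calE$ is concentrated in a single cohomological degree and is a zero-dimensional coherent sheaf $\widehat{\calE}$ of length $n$ on $T^*\widehat{C}$, supported at the finitely many pairs consisting of a transformed support point and an eigenvalue of $\varphi$.

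To recover the Hilbert-scheme data I would track the marking. Using $\calP|_{\{o\}\times\widehat{C}}\cong\calO_{\widehat{C}}$ together with Serre duality on $C$, the fibre vector $v\in E_o\cong\Ext^1_C(\bbC_o,E)$ transforms under the equivalence into a global section $\widehat{v}\in H^0(T^*\widehat{C},\widehat{\calE})$. I would then argue that the cyclicity hypothesis on $v$---that no proper $\varphi$-invariant subbundle $F$ with $\mu(F)\ge\mu(E)=0$ contains $v$ at $o$---is exactly equivalent to $\widehat{v}$ generating $\widehat{\calE}$ as an $\calO_{T^*\widehat{C}}$-module, i.e. to surjectivity of the evaluation $\calO_{T^*\widehat{C}}\srj\widehat{\calE}$. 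Its kernel is then a colength-$n$ ideal, a point of $\Hilb^n(T^*\widehat{C})$, and the self-duality $\widehat{C}\cong C$ furnished by the Abel--Jacobi map identifies $\Hilb^n(T^*\widehat{C})\cong\Hilb^n(T^*C)$.

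Finally I would promote this to an isomorphism of varieties by performing the whole construction in families: since the relative Fourier--Mukai transform is an equivalence of derived categories, it should carry a flat family of triples to a flat family of quotients $\calO_{T^*\widehat{C}}\srj\widehat{\calE}$ and back, yielding mutually inverse morphisms of moduli functors. The hardest part will be this step together with the matching of the previous paragraph: one must verify the WIT/base-change statement uniformly in families, so that the transform stays a sheaf which is flat of length $n$ with no jump in cohomological degree, and one must check \emph{scheme-theoretically}, not merely on closed points, that cyclicity of the marking is equivalent to surjectivity onto $\widehat{\calE}$. These are precisely what upgrade the construction from a bijection of point sets to an isomorphism of schemes.
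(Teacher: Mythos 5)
Your proposal is correct in outline and runs on the same engine as the paper's proof --- the relative Fourier--Mukai transform $\Phi_{\bbA^1}$ along the $\bbA^1$-direction combined with the BNR correspondence (\cref{BNR}), with the marking matched to the section/surjection datum via Serre duality and stability matched to surjectivity --- but you run the equivalence in the opposite direction. The paper starts from a length-$n$ subscheme $Z\subset T^*C$ and transforms $\calO_Z$ into a marked Higgs bundle; there, the fact that $\Phi_{\bbA^1}(\calO_Z)$ is a sheaf (concentrated in one degree) is immediate because $\pi_2$ restricted to $\Supp(\pi_1^*\calO_Z\otimes l^*\calP)$ is finite. In your direction that shortcut is unavailable: the support of $\pi_1^*\calE\otimes l^*\calP$ is $X_s\times_{\bbA^1}T^*\widehat{C}$, whose fibre over a point $(\widehat{\xi},\lambda)$ with $\lambda$ a root of the characteristic polynomial is all of $C$, so you genuinely need the $\text{WIT}_1$ input, as you correctly anticipate. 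Two points should be made explicit to close that step. First, the $\text{WIT}_1$ fact applies to the \emph{bundle} $E$, so you need that a degree-$0$ semistable Higgs bundle over $C$ has semistable underlying bundle; this holds because $K_C\cong\calO_C$ forces the maximal destabilizing subbundle $F$ to be $\varphi$-invariant (one has $\Hom(F,E/F)=0$ by the slope comparison $\mu_{\min}(F)>\mu_{\max}(E/F)$), but it is a lemma, not a hypothesis. Second, the relative WIT statement follows from the absolute one via the compatibility $\mathbf{R}g_*\circ\Phi_{\bbA^1}\cong\Phi\circ\mathbf{R}f_*$ that the paper records, since $g$ is affine and hence $g_*$ detects cohomological amplitude. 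With these supplied, your construction is literally the inverse functor of the paper's: your dictionary (marking $v\in E_o\cong\Ext^1_C(\bbC_o,E)$ corresponding to a section $\widehat{v}\in H^0(\widehat{\calE})$, cyclicity corresponding to surjectivity of $\calO_{T^*\widehat{C}}\srj\widehat{\calE}$, kernel a colength-$n$ ideal) matches the paper's $\Hom$-computation and its commutative-diagram argument for stability. Your closing caveat about carrying out the construction in flat families is a fair flag, but not a defect relative to the paper, whose own proof likewise argues at the level of closed points and defers the scheme-theoretic statement to \cite{MG}. What each direction buys: the paper's avoids WIT machinery entirely, while yours makes the length-$n$ count and the ideal-sheaf description of the Hilbert-scheme point more direct.
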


\begin{proof}[Sketch of proof]
  We have the Fourier-Mukai transform with kernel $\calP$
  \begin{align*}
    \Phi\ :\ D^b(C) &\lto D^b(\widehat{C}) \\
    \calE^{\bullet} &\lmto \mathbf{R}p_{2_*}(\mathbf{L}p_1^*\calE^{\bullet}\otimes^\mathbf{L}\calP) = \mathbf{R}p_{2_*}(p_1^*\calE^{\bullet}\otimes\calP)
  \end{align*}
  
  Since the cotangent bundle on an elliptic curve is trivial, we can obtain the projection from the cotangent bundle $f:T^*C\lto C$ via the base change:
  \begin{align*}
    \xymatrix{
      T^*C\cong C\times \bbA^1 \ar[d] \ar[r]& C \ar[d] \\
      \bbA^1 \ar[r]& \text{Spec}\bbC
    }
  \end{align*}
  Similarly we can regard the projection from the cotangent bundle $g:T^*\widehat{C}\lto \widehat{C}$ as the base change.
  From the universality of the pullback diagram, we obtain the following commutative diagram:
  \begin{align*}
    \xymatrix{
      &T^*C\times_{\bbA^1} T^*\widehat{C} \ar[rr] \ar[ld] \ar[dd] & & T^*C \ar[ld] \ar[dd] & \\
       \widehat{C}\times C \ar[dd] \ar[rr] & & C \ar[dd] \\
      &T^*\widehat{C} \ar[rr] \ar[ld] & & \bbA^1 \ar[ld] & \\
      \widehat{C} \ar[rr] & & \text{Spec}\bbC
    }
  \end{align*}
  Denote the newly appearing morphisms in the above diagram as $l: T^*C\times_{\bbA^1} T^*\widehat{C}\lto C\times \widehat{C}$, $\pi_1: T^*C\times_{\bbA^1} T^*\widehat{C}\lto T^*C$ and $\pi_2: T^*C\times_{\bbA^1} T^*\widehat{C}\lto T^*\widehat{C}$.
  
  The relative Fourier-Mukai transform obtained from the above base change diagram 
  \begin{align}\label{relative Fourier-Mukai}
    \Phi_{\bbA^1}\ :\ D^b(T^*C)&\lto D^b(T^*\widehat{C}) \\
    \calE^{\bullet} &\lmto \mathbf{R}\pi_{2_*}(\mathbf{L}\pi_1^*\calE^{\bullet}\otimes^\mathbf{L}l^*\calP) = \mathbf{R}\pi_{2_*}(\pi_1^*\calE^{\bullet}\otimes l^*\calP) \notag
  \end{align}
  is an equivalence of categories, with kernel $l^*\calP\in D^b(T^*C\times_{\bbA^1}T^*\widehat{C})$.

  Consider the structure sheaf $\calO_Z$ of a length $n$ subscheme $Z\subset T^*C$, $\calF \coloneqq \pi_1^*\calO_Z\otimes l^*\calP$, $\calM \coloneqq \Phi_{\bbA^1}(\calO_Z)=\mathbf{R}\pi_{2_*}(\calF)$ is a sheaf complex.
  However, $\pi_{2}: \text{Supp}\calF \lto T^*\widehat{C}$ is finite, so the higher direct image $\mathbf{R}^i\pi_{2_*}(\calF)\ \ (i\neq 0)$ vanishes, thus $\calM$ is actually a sheaf. 
  
  Moreover, we can show that $g_*\calM$ is locally free of rank $n$ and $\text{Supp}\calM$ corresponds to a spectral curve $X_s\subset T^*\widehat{C}$.
  Thus, from the BNR correspondence (\cref{BNR}), via the direct image $g_*$, $\calM$ corresponds to a rank $n$ degree $0$ Higgs bundle $(M(\coloneqq g_*\calM),\varphi)$.

  To determine the marked structure of our Higgs bundle $(M,\varphi)$, consider the following exact sequence.
  \[
    0\lto \calI_Z \lto \calO_{T^*C} \xrr{\ s\ } \calO_Z \lto 0.
  \]
  Applying the relative Fourier-Mukai transform, we get the exact triangle in $D^b(T^*\widehat{C})$,
  \[
    \calI \lto \calO_{g^{ - 1}({\widehat{o}})}[ - 1] \lto \calM \lto \calI[1].
  \]
  The datum of the surjection $s: \calO_{T^*C}\lto \calO_Z$ is translated by the relative Fourier-Mukai functor into an element in 
  \begin{align*}
    \Hom_{D^b(T^*\widehat{C})}(\calO_{g^{ - 1}(\widehat{o})}[ - 1],\calM) &\cong \Hom_{D^b(T^*\widehat{C})}(g^*\bbC_{\widehat{o}},\calM[1]) \cong \Hom_{D^b(\widehat{C})}(\bbC_{\widehat{o}},M[1]) \\
    &\cong \Ext^1_{\widehat{C}}(\bbC_{\widehat{o}},M) \cong \Hom(M,\bbC_{\widehat{o}})^* \cong M_{\widehat{o}}.
  \end{align*}
  Thus, the surjection $s$ is parametrized by the vector $v\in M_{\widehat{o}}$, where $M_{\widehat{o}}$ is denoted as the fiber of $M$ over the origin $\widehat{o}\in \widehat{C}$.

  Finally, the stability condition for the marked Higgs bundle is that the Higgs bundle $(M,\varphi)$ has no proper Higgs subbundle $(N,\varphi_{|N})$ of degree $0$ such that $v\in N_{\widehat{o}}$.
\end{proof}

\begin{eg}\label{ex1}
  The moduli space of rank $1$ degree $0$ Higgs bundles over the elliptic curve $C$ which we denote by $\calM_H(1,0)$ is isomorphic to the cotangent bundle: 
  \[
    T^*\widehat{C} \cong \calM_H(1,0)
  \]
  Indeed, $\widehat{C}$ parametrizes degree $0$ line bundles, and for any degree $0$ line bundle $L$ which corresponds to a point $\widehat{p}\in \widehat{C}$, we have
  \[
    H^0(C,\End L\otimes K_C)\cong H^0(C,K_C) \cong H^1(C,\calO_C)^* \cong T^*_{L}\text{Pic}^0(C) = T^*_{\widehat{p}}\widehat{C}
  \]
  so we have the isomorphism given by 
  \begin{align*}
    \calM_H(1,0) &\lsto T^*\widehat{C}( =\widehat{C}\times \bbA^1) \\
    (L,\varphi_t(\coloneqq t \text{d}z)) &\lmto (\widehat{p},t)
  \end{align*}

  Now, fix the degree $0$ line bundle $L$ which corresponds to a point $\widehat{p}\in \widehat{C}$, we denote the ``horizontal'' spectral curve given by the section $t\text{d}z\in H^0(C,K_C)$ as $X_t\subset T^*C$, and denote the natural projection as $\pi_t: X_t\lto C$.

  Via the BNR correspondence, the rank $1$ degree $0$ Higgs bundle $(L,\varphi_t)$ corresponds to the line bundle $L_t$ on $X_t$, i.e. $\pi_{t*}L_t=L$.
  On the other hand, if we denote $\widehat{\pi}_t: \widehat{X_t}\lto \widehat{C}$ similarly, then we have $\widehat{\pi}_{t*}(\bbC_{(\widehat{p},t)})=\bbC_{\widehat{p}}$.

  In this case, the relative Fourier-Mukai transform $\Phi_{\bbA^1}$ degenerates to $\Phi_t: D^b(\widehat{X_t})\lto D^b(X_t)$, by the base change formula of Fourier-Mukai transform, we have 
  \[
    \pi_{t*}\Phi_t(\bbC_{(\widehat{p},t)}) \cong \Phi(\widehat{\pi}_{t*}(\bbC_{(\widehat{p},t)})) =\Phi(\bbC_{\widehat{p}})\cong L.
  \]
  Therefore, we have seen that the Higgs bundle $(L,\varphi_t)$ corresponds to the skyscraper sheaf $\bbC_{(\widehat{p},t)}\subset T^*\widehat{C}$.
\end{eg}

Since the elliptic curve $C$ is isomorphic to its dual $\widehat{C}$, we will make no distinction between them later on.

\section{Symplectic structures on marked Higgs bundles and Hilbert scheme of $n$ points}

In this section, we show that the isomorphism in \cref{thm1} is indeed a symplectomorphism, i.e.
the pullback of the natural symplectic structure on $\text{Hilb}^n(T^*C)$ coincides with the natural symplectic structure on $\calM_H^v(n,0)$.

\subsection{Symplectic structure on the moduli space of marked Higgs bundles}\label{Symp}

For any semistable Higgs bundle $(E,\varphi)$, consider the End-bundle $\End(E)$ and its associated Dolbeault operator $\bar{\partial}$.
We have the following diagram of the Dolbeault resolution of $\End(E)$

\[
  \xymatrix{
    & 0 \ar[d] & 0 \ar[d] \\
    0\ar[r]& \Omega^0(C,\End(E)) \ar[d]^-{\bar{\partial}} \ar[r]^ -{[\varphi, -]} & \Omega^{1,0}(\End(E)) \ar[d]^-{ -\bar{\partial}} \ar[r]& 0 \\
    0 \ar[r] & \Omega^{0,1}(\End (E)) \ar[d] \ar[r]^ -{[\varphi, -]} & \Omega^{1,1}(\End(E)) \ar[d] \ar[r] & 0 \\
    & 0 & 0
  }
\]
This gives us the complex 
\[
  0 \lto \Omega^0(C,\End(E))\xrr{\ f\ } \Omega^{0,1}(\End(E))\oplus \Omega^{1,0}(\End(E))\xrr{\ g\ } \Omega^{1,1}(\End(E))\lto 0
\]
where $f\coloneqq [\varphi, - ] +\bar{\partial}$, and $g\coloneqq [\varphi, - ] -\bar{\partial}$, denote the hypercohomology group $\Ker g\slash \text{Im}f$ as $\bbH^1(\End(E))$.
In fact, we have a homomorphism $\iota: \bbH^1(\End(E))\lto H^1(C,\End(E))$.

From \cite[Section 6]{BR}, the moduli space $\calM_H^v(n,d)$ has a natural holomorphic symplectic structure which we give a brief review here :

The tangent space at $(E,\varphi,v)$ is given as 
\begin{align}\label{tangent space}
  T_{(E,\varphi,v)} \calM_H^v(n,d) \cong \bbH^1(\End_o(E)),
\end{align}
where $\End_o(E)$ is given as $\End_o(E) = \{f\in \End(E) \mid f_{o}(v)=c \times v, c\in \bbC \}$.

Using Serre duality, the Liouville 1-form $\Omega_{(n,d)}$ is given by 
\begin{align}\label{1-form}
  \Omega_{(n,d)}(\beta) = \int_{C}\text{tr}(\iota(\beta)\varphi)
\end{align}
where $\beta\in T_{(E,\varphi,v)} \calM_H^v(n,d) \cong \bbH^1(\End_o(E))$. 

The differential of the Liouville 1-form can be constructed and is a symplectic form on $\calM_H^v(n,d)$.
We will denote such symplectic form by $\text{d}\Omega_{(n,d)}$.

\subsection{Symplectic structure on the Hilbert scheme of $n$ points}\label{sec4}

The generic length $n$ subscheme of the Hilbert scheme $\text{Hilb}^n(T^*C)$ is given by $n$ pairwise distinct points $(p_1,t_1),\dots ,(p_n,t_n)\in T^*C$, we denote the set of such points as $(T^*C)^n_{\star}$, it is easy to see that this is an open subset of $\text{Hilb}^n(T^*C)$, and its inverse image by the quotient map $(T^*C)^n\lto \text{Sym}^n(T^*C)$ is denoted by $(T^*C)_{\circ}^n$.

The next theorem shows that there exists a holomorphic symplectic form $\text{d}\Omega$ on $\text{Hilb}^n(T^*C)$ induced uniquely by the natural symplectic form on $(T^*C)^n_{\star}$.

\begin{thm}(\cite{Bea}, see also \cite[Theorem 1.17]{HK})\label{thm3}
  $\text{Hilb}^n(T^*C)$ has a holomorphic symplectic form which is induced by the holomorphic symplectic form on $(T^*C)^n_{\star}$ uniquely.
\end{thm}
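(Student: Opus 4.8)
The plan is to construct the form first on the locus of reduced (distinct) subschemes, where it is transparent, and then to argue that it extends holomorphically, stays closed, and remains non-degenerate across the exceptional divisor of the Hilbert--Chow morphism.

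Write $S \coloneqq T^*C \cong C \times \bbA^1$. Since $K_C \cong \calO_C$, the elliptic curve carries a nowhere-vanishing holomorphic $1$-form $\mathrm{d}z$; together with the fibre coordinate $t$ on $\bbA^1$ this yields a global holomorphic symplectic form $\omega = \mathrm{d}z \wedge \mathrm{d}t$ on $S$, which trivialises $K_S$. On the ordered configuration space $(T^*C)^d_\circ \subset S^d$ the form $\omega^{(d)} \coloneqq \sum_{i=1}^{d} \mathrm{pr}_i^*\omega$, with $\mathrm{pr}_i : S^d \to S$ the $i$-th projection, is holomorphic, symplectic, and invariant under the $\mathfrak{S}_d$-action permuting the factors. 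Hence it descends to a holomorphic symplectic form on $(T^*C)^d_\star = (T^*C)^d_\circ / \mathfrak{S}_d$, which the Hilbert--Chow morphism $\rho : \text{Hilb}^d(T^*C) \to \text{Sym}^d(T^*C)$ identifies with a dense open subset of $\text{Hilb}^d(T^*C)$. This defines the desired form $\omega^{[d]}$ over the reduced locus.

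The only real obstacle is to extend $\omega^{[d]}$ holomorphically over the complement of the reduced locus, which is the exceptional divisor of $\rho$: it has codimension $1$, so a Hartogs-type argument is unavailable. I would use that $\text{Sym}^d(S)$, being a finite quotient of the smooth variety $S^d$, has (Gorenstein, rational) quotient singularities, and that $\rho$ is a crepant resolution. The invariant form $\omega^{(d)}$ defines a reflexive holomorphic $2$-form on $\text{Sym}^d(S)$, and by the extension property of reflexive differentials across resolutions of varieties with canonical singularities, its pullback is a genuine holomorphic $2$-form $\omega^{[d]}$ on $\text{Hilb}^d(T^*C)$ agreeing with the one already built on the reduced locus. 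Concretely, the generic transverse structure of $\rho$ along the exceptional divisor is the minimal resolution $\text{Hilb}^2 \to \text{Sym}^2$ of the $A_1$-singularity in the two colliding directions, so the extension can alternatively be verified by that explicit local model.

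Once $\omega^{[d]}$ is extended, closedness and non-degeneracy are soft. The form $\mathrm{d}\,\omega^{[d]}$ is a holomorphic $3$-form vanishing on the dense reduced locus, hence identically zero. For non-degeneracy, crepancy gives $K_{\text{Hilb}^d(S)} = \rho^* K_{\text{Sym}^d(S)}$, so the nowhere-vanishing section of $K_{\text{Sym}^d(S)}$ obtained by descending $\omega^{\wedge d}$ pulls back to a nowhere-vanishing section of $K_{\text{Hilb}^d(S)}$; since $(\omega^{[d]})^{\wedge d}$ equals a nonzero constant multiple of this pullback on the dense reduced locus, the two coincide everywhere by the identity theorem, whence $(\omega^{[d]})^{\wedge d}$ is nowhere zero and $\omega^{[d]}$ is everywhere non-degenerate. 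Setting $\mathrm{d}\Omega \coloneqq \omega^{[d]}$ produces the asserted holomorphic symplectic form. The hard step is thus the holomorphic extension across the exceptional divisor; the remaining verifications reduce to the identity theorem together with the crepancy of Hilbert--Chow.
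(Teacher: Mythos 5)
Your proposal is correct, and it diverges from the paper's argument precisely at the one hard step: extending the form across the exceptional locus of the Hilbert--Chow morphism. Up to that point the two proofs coincide — both take $\omega^{(d)}=\sum_i \mathrm{pr}_i^*\omega$ on $(T^*C)^d$, note its invariance under $S_d$, and descend it to the configuration locus $(T^*C)^d_{\star}$. From there the paper follows Nakajima: restrict to the stratum $\text{Sym}^d_1(T^*C)$ where at most two points collide, blow up the big diagonal $\triangle$ in $(T^*C)^d_1$, realize $(T^*C)^{[d]}_1$ as the quotient of $\text{Blow}_{\triangle}((T^*C)^d_1)$ by $S_d$ via the map $\rho$ ramified along the exceptional divisor $D$, obtain a form $\Lambda$ with $\rho^*\Lambda=\eta^*\mathrm{d}\Omega$, prove non-degeneracy by the divisor identity $\mathrm{div}(\rho^*\Lambda^d)=\rho^*\mathrm{div}(\Lambda^d)+D$ versus $\mathrm{div}(\eta^*(\mathrm{d}\Omega^d))=D$, and only at the very end apply Hartogs across the genuinely codimension-two locus $\Hilb^d(T^*C)\setminus (T^*C)^{[d]}_1$. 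You instead invoke the extension theorem for reflexive differentials on klt (canonical quotient) singularities applied to $\text{Sym}^d(T^*C)$, plus crepancy of Hilbert--Chow, and deduce non-degeneracy from $K_{\Hilb^d}=$ (pullback of) $K_{\text{Sym}^d}$ together with the identity theorem. Both routes are valid: yours is shorter modulo two substantial citations (the reflexive-extension theorem and crepancy — the latter is provable from your $A_1$ local model independently of the symplectic form, so there is no circularity) and generalizes at once to $\Hilb^n$ of any surface with trivialized canonical bundle, while the paper's is self-contained, needing only the discrepancy of a smooth blow-up and of a ramified double cover. Note that your crepancy argument is the paper's divisor computation in disguise: the cancellation of the two $+D$ terms is exactly the statement that the resolution is crepant along $D$. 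One point worth recording in your favor: you correctly flag that Hartogs cannot cross the exceptional divisor, whereas the paper's write-up asserts that $(T^*C)^{[d]}_1\setminus (T^*C)^d_{\star}$ has codimension two and produces $\Lambda$ by Hartogs — in fact that complement is the codimension-one exceptional divisor over $\text{Sym}^d_1(T^*C)$, and in Nakajima's original argument $\Lambda$ is instead obtained by descending the $S_d$-invariant form $\eta^*\mathrm{d}\Omega$ through the ramified quotient $\rho$ (a local $\bbZ/2\bbZ$ computation); your explicit $A_1$ local model, i.e.\ $T^*\bbP^1\to \bbC^2/\pm$, supplies precisely this step.
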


\begin{proof}
  Let $\text{Sym}^n_1(T^*C)$ be the subset of $\text{Sym}^n(T^*C)$ consisting of $\sum \nu_i[q_i]$ ($q_i$ distinct) with $\nu_1\leq 2,\ \nu_2=\cdots=\nu_k=1$.
  Its inverse image by the Hilbert-Chow morphism $\pi: \text{Hilb}^n(T^*C)\lto \text{Sym}^n(T^*C)$ is denoted by $(T^*C)_1^{[n]}$, and its inverse image by the quotient map $(T^*C)^n\lto \text{Sym}^n(T^*C)$ is denoted by $(T^*C)_1^n$. 
  Let us denote by $\triangle \subset (T^*C)^n$ the ``big diagonal'' consisting of elements $(q_1,\dots,q_n)$ with $q_i = q_j$ for some $i\neq j$.
  Then $\triangle \cap (T^*C)_1^n$ is smooth of codimension $2$ in $(T^*C)_1^n$. Moreover, we have the following commutative diagram
  \[
    \xymatrix{
      \text{Blow}_{\triangle} ((T^*C)_1^n) \ar[d]^ - {\rho} \ar[r]^ -{\eta} & (T^*C)_1^n \ar[d] \\
      (T^*C)_1^{[n]} \ar[r]^ - {\pi} & \text{Sym}_1^n(T^*C)
    }
  \]
  where $\rho$ is the map given by taking the quotient by the action of the symmetry group $S_n$. 
  It follows immediately that $\rho$ is a covering ramified along the exceptional divisor $D$ of the blow up $\eta$.

  Since $(T^*C)_1^n \setminus (T^*C)_{\circ}^n$ is of codimension $2$ in $(T^*C)_1^n$, by Hartogs theorem, the holomorphic symplectic form on $(T^*C)_{\circ}^n$ extends uniquely on $(T^*C)_1^n$, which we denote by $\text{d}\Omega$.
  
  The pull-back $\eta^*\text{d}\Omega$ is invariant under the action of $S_n$, hence it defines an unique holomorphic two form $\Lambda$ on $(T^*C)_1^{[n]}$ with $\rho^*\Lambda= \eta^*\text{d}\Omega$.
  Then we have 
  \[
    \text{div}(\rho^*(\Lambda^n)) = \rho^*\text{div}(\Lambda^n) + D.
  \]
  On the other hand, the left hand side is equal to 
  \[
    \text{div}(\eta^*(\text{d}\Omega^n)) = \eta^*\text{div}(\text{d}\Omega^n) + D = D.
  \]
  Therefore we have $\text{div}(\Lambda^n)=0$, hence $\Lambda$ is a holomorphic symplectic form on $(T^*C)_1^{[n]}$.

  Now, $\text{Hilb}^n(T^*C)\setminus (T^*C)_1^{[n]}$ is of codimension $2$ in $\text{Hilb}^n(T^*C)$, hence $\Lambda$ extends to the whole $\text{Hilb}^n(T^*C)$ uniquely as a holomorphic form by the Hartogs theorem.
  We still have $\text{div}\Lambda^n=0$ in $\text{Hilb}^n(T^*C)$, hence $\Lambda$ is non-degenerate.
\end{proof}

Let $\Omega$ denote the corresponding holomorphic one-form on $\text{Hilb}^n(T^*C)$, i.e. its differential is $\text{d}\Omega$.
From \cite[Proposition 3.2]{BM2}, such form $\Omega$ exists and is induced naturally by the tautological one form on $(T^*C)^n_{\star}$.

\subsection{Symplectic structures in the generic case}\label{sec5}

This section is devoted to show that $\calM_H^v(n,0)\cong \text{Hilb}^n(T^*C)$ is a symplectomorphism with respect to their natural symplectic structures.

In the generic case, the underlying Higgs bundle of an element in $\calM_H^v(n,0)$ is polystable, thus the Higgs bundle is given by the direct sum $(E,\varphi)=\bigoplus_i(L_i,\varphi_i)$ with $n$ pairwise distinct pieces: 
\[
(L_i,\varphi_i)\neq (L_j,\varphi_j),\ (L_i,\varphi_i)\in \calM_H(1,0)\ (1\leq i\leq n).
\]

\begin{dfn}
  Define the moduli space $\calM_H^{ps}(n,0)$ as the subset of $\calM_H^v(n,0)$ which parametrizes the triples $(E,\varphi,v)$, where the underlying Higgs bundle is given by the direct sum $(E,\varphi)=\bigoplus_i(L_i,\varphi_i)$ with $n$ pairwise distinct pieces: 
  \[
    (L_i,\varphi_i)\neq (L_j,\varphi_j),\ (L_i,\varphi_i)\in \calM_H(1,0)\ (1\leq i\leq n).
  \]
  
  The marked structure is constructed by choosing a vector $v$ in the fiber $E_o$ which is not contained in any subspace given by direct sums of the sections of line bundles $L_i$.
\end{dfn}

The isomorphism class of marked Higgs bundles in this case is independent of the choice of the vector $v \in E_o$.
Therefore, two elements $(E,\varphi,v)$ and $(F,\psi,w)$ in $\calM_H^{ps}(n,0)$ are equivalent if and only if the underlying Higgs bundles are isomorphic $(E,\varphi) \cong (F,\psi)$, so $\calM_H^{ps}(n,0)$ indeed parametrizes the isomorphism classes of such Higgs bundles.

Moreover, since the Higgs bundles in $\calM_H^{ps}(n,0)$ are given by the direct sum of $n$ pairwise distinct components, to say that such two Higgs bundles are isomorphic is equivalent to say that their components are the same except the order.

\begin{prp}
  The isomorphism $\calM_H^v(n,0)\cong \text{Hilb}^n(T^*C)$ restricts to an isomorphism $\calM_H^{ps}(n,0) \cong (T^*C)^n_{\star}$, where $(T^*C)^n_{\star}$ is the reduced locus of $\text{Hilb}^n(T^*C)$.
\end{prp}

\begin{proof}
  It follows from \cref{ex1} and the fact that the relative Fourier-Mukai functor (\ref{relative Fourier-Mukai}) is an additive functor.
\end{proof}

Using notation similar to that in \cref{ex1}, we denote the isomorphism as 
\begin{align*}
  \calM_H^{ps}(n,0) &\longrightarrow  (T^*C)^n_{\star} \\
  \bigoplus_{i=1}^{n}(L_i,\varphi_i) &\longmapsto (p_1,t_1),\dots ,(p_n,t_n).
\end{align*}

\begin{prp}
  The isomorphism $\calM_H^{ps}(n,0) \cong (T^*C)^n_{\star}$ is a symplectomorphism.
\end{prp}

\begin{proof}
  Recall that $\calM_H^{ps}(n,0)$ parametrizes the isomorphism class of $(E,\varphi)=\bigoplus_i(L_i,\varphi_i)$ with $n$ pairwise distinct pieces.
  In this case, the choice of the marked structure does not affect the isomorphism class, therefore, the deformation at $(E, \varphi,v)$ reduces to
  \begin{align*}
    T_{(E,\varphi)} \calM_H^{ps}(n,0) \cong \bbH^1(\End(E)),
  \end{align*}
  for any $(E,\varphi) \in \calM_H^{ps}(n,0)$.

  We calculate the endomorphism $\End(E)$ in our case as
  \begin{align*}
    \End(E) \cong (\bigoplus_{i=1}^{n} L_i) \otimes (\bigoplus_{i=1}^{n} L_i)\ \check{} \cong \bigoplus_{i,j} (L_i \otimes L_j \check{}\ ).
  \end{align*}
  When $i \neq j$, $L_i \otimes L_j \check{}\ $ is a nontrivial degree $0$ line bundle on the elliptic curve $C$, thus we have $H^0(C, L_i \otimes L_j \check{}\ ) = H^1(C, L_i \otimes L_j \check{}\ )=0$.
  On the other hand, when $i = j$, we have $L_i \otimes L_j \check{}\ = \calO_C$. 

  Hence, the tangent space over the open locus $\calM_H^{ps}(n,0) \subset \calM_H^v(n,0)$ is given as 
  \begin{align*}
    T_{(E,\varphi)} \calM_H^{ps}(n,0) \cong \bbH^1(\End(E)) \cong \bbH^1(\bigoplus_{i,j} (L_i \otimes L_j \check{}\ )) \cong \bigoplus_{i=1}^{n} (H^1(C, \calO_C)_i \oplus H^0(C, \calO_C))_i .
  \end{align*}

  Now, choose the identification $H^1(C, \calO_C)_i \cong T_{p_i}C$, $H^0(C, \calO_C)_i \cong T_{t_i} \bbA^1$.
  We can read off the Liouville $1$-form in (\ref{1-form}) as
  \begin{align}
    \Omega_{(n,0)}(\beta) = \int_{C}\text{tr}(\iota(\beta)\varphi) = \int_{C}\sum_{i=1}^{n}(\iota(\beta)\varphi_i) = \sum_{i=1}^{n}(t_i\,dx_i)
  \end{align}
  where $\beta\in T_{(E,\varphi)} \calM_H^{ps}(n,0) \cong \bbH^1(\End(E))$. 

  This corresponds exactly to the Liouville $1$-form on the reduced locus $(T^*C)^n_{\star}$.
\end{proof}

From the proof of \cref{thm3}, the symplectic structure on $\text{Hilb}^n(T^*C)$ is obtained by extending the symplectic structure on $(T^*C)^n_{\star}$ uniquely. 
Hence, we can see that such symplectomorphism can be extended to the entire space.

\begin{thm}\label{thm5}
  $\calM_H^v(n,0)\cong \text{Hilb}^n(T^*C)$ is a symplectomorphism induced by the symplectomorphism $\calM_H^{ps}(n,0) \cong (T^*C)^n_{\star}$.
\end{thm}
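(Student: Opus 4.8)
The plan is to promote the symplectomorphism on the generic (polystable) locus, already established in the computation immediately preceding the statement, to the global isomorphism $\Psi\colon \calM_H^s(n,0)\to \text{Hilb}^n(T^*C)$ of \cref{thm1}, by a density-and-holomorphicity argument. The essential principle is that a holomorphic $2$-form on a smooth connected complex variety is completely determined by its restriction to any Zariski-dense open subset, so it will suffice to know that $\Psi$ pulls back the symplectic form correctly on the polystable locus.

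First I would record the two structural facts that make the extension possible. On one hand, $T^*C\cong C\times\bbA^1$ is a smooth surface, so $\text{Hilb}^n(T^*C)$ is smooth and connected; transporting this through the isomorphism of \cref{thm1} shows that $\calM_H^s(n,0)$ is smooth and connected as well. On the other hand, the open subscheme $(T^*C)^n_{\star}\subset \text{Hilb}^n(T^*C)$ parametrizing $n$ distinct points is Zariski-dense, and under $\Psi$ its preimage is precisely the polystable locus $\calM_H^{ps}(n,0)$, which is therefore Zariski-dense in $\calM_H^s(n,0)$. I would then check that the restriction $\Psi|_{\calM_H^{ps}(n,0)}$ coincides with the symplectomorphism $\text{Iso}\colon \calM_H^{ps}(n,0)\to (T^*C)^n_{\star}$; this is immediate from the explicit Fourier--Mukai and BNR description underlying \cref{thm1} in the polystable case, where a direct sum $\bigoplus_i(E_i,\varphi_i)$ is sent to the $n$-tuple of distinct points via \cref{ex1}.

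With these in place, the core argument is short. We have already shown $\text{Iso}^*\text{d}\Omega=\text{d}\Omega_{(n,0)}$ on $\calM_H^{ps}(n,0)$, which says that the two holomorphic $2$-forms $\Psi^*\text{d}\Omega$ and $\text{d}\Omega_{(n,0)}$ on $\calM_H^s(n,0)$ agree on the Zariski-dense open set $\calM_H^{ps}(n,0)$. Since both are holomorphic sections of $\bigwedge^2 T^*\calM_H^s(n,0)$ over the smooth connected variety $\calM_H^s(n,0)$, the identity theorem forces them to agree everywhere, so $\Psi^*\text{d}\Omega=\text{d}\Omega_{(n,0)}$ globally and $\Psi$ is a symplectomorphism. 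Equivalently, pushing $\text{d}\Omega_{(n,0)}$ forward by the biholomorphism $\Psi$ yields a holomorphic symplectic form on $\text{Hilb}^n(T^*C)$ whose restriction to $(T^*C)^n_{\star}$ is the tautological form; by the uniqueness of such an extension established in the proof of \cref{thm3}, it must coincide with $\text{d}\Omega$.

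I expect the only genuine subtlety to lie in the compatibility check of the second paragraph, namely that the abstract isomorphism $\Psi$ of \cref{thm1} really restricts to the concrete map $\text{Iso}$ on which the generic symplectomorphism was computed. Once this matching is confirmed, the global statement becomes a formal consequence of holomorphic continuation across the codimension-one complement, together with the uniqueness clause of \cref{thm3}; the remaining density and smoothness claims follow directly from the Hilbert-scheme structure and therefore pose no real obstacle.
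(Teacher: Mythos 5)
Your proposal is correct and takes essentially the same route as the paper: establish the symplectomorphism on the dense polystable locus $\calM_H^{ps}(n,0)\cong (T^*C)^n_{\star}$ via the diagram \ref{dg_1} computation, then extend globally, your identity-theorem argument for the two holomorphic $2$-forms being just a restatement of the paper's appeal to the uniqueness of the extension from $(T^*C)^n_{\star}$ in the proof of \cref{thm3}. Your explicit check that the isomorphism of \cref{thm1} restricts on the polystable locus to the map $\text{Iso}$ is a point the paper handles implicitly through the commutativity of diagram \ref{dg_1} and the choice of twisting by $\calO_C(-o)$, so it adds care without changing the argument.
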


\begin{rmk}
  It is well-known that the Hilbert Chow morphism $H: \text{Hilb}^n(T^*C)\lto \text{Sym}^n(T^*C)$ is the symplectic resolution.
  Together with \cref{thm5}, one can conclude that the ``Hilbert Chow morphism'' on the Higgs side is given as 
  \begin{align*}
    H_v: \calM_H^v(n,0) &\lto \calM_H^{ss}(n,0)\\
    (E,\varphi,v) &\lmto (E,\varphi)
  \end{align*}
  This is the map that forgets the datum of the vector $v\in E_o$.
  
  Finally, we have the following commutative diagram that preserves the symplectic structures.
  \[
    \xymatrix{
      \calM_H^v(n,0) \ar[r]^ - {\cong} \ar[d]^ - {H_v} & \text{Hilb}^n(T^*C)\ar[d]^ -{H} \\
      \calM_H^{ss}(n,0) \ar[r]^ - {\cong} & \text{Sym}^n(T^*C)
    }
  \]
\end{rmk}


\end{document}